\newcommand{\modelo}[1]{\mathbf{#1}}
\newcommand{\axiomas}[1]{\mathit{#1}}
\newcommand{\clase}[1]{\mathsf{#1}}
\DeclareMathAlphabet{\mathbbm}{U}{bbm}{m}{n}
\newcommand{\Pow}{\mathop{\calP}}
\newcommand{\R}{\mathbb{R}}
\newcommand{\N}{\mathbb{N}}
\renewcommand{\phi}{\varphi}
\newcommand{\defi}{\mathrel{\mathop:}=}
\newcommand{\AC}{\axiomas{AC}}
\newcommand{\union}{\mathop{\textstyle\bigcup}}
\newcommand{\sm}{\smallsetminus}
\newcommand{\sbq}{\subseteq}
\renewcommand{\leq}{\leqslant}
\newcommand{\segm}{\sqsubseteq}
\newcommand{\propsegm}{\sqsubset}
  \DeclareFontFamily{U}{txsymbols}{}
  \DeclareFontFamily{U}{txAMSb}{}
  \DeclareSymbolFont{txsymbols}{OMS}{txsy}{m}{n}
  \DeclareSymbolFont{txAMSb}{U}{txsyb}{m}{n}
  \DeclareMathSymbol{\aleph}{\mathord}{txsymbols}{64}
  \DeclareMathSymbol{\beth}{\mathord}{txAMSb}{105}
  \DeclareMathSymbol{\gimel}{\mathord}{txAMSb}{106}
  \DeclareMathSymbol{\daleth}{\mathord}{txAMSb}{107}
\definecolor{keywordcolor}{rgb}{0.7, 0.1, 0.1}   %
\definecolor{tacticcolor}{rgb}{0.0, 0.1, 0.3}    %
\definecolor{commentcolor}{rgb}{0.4, 0.4, 0.4}   %
\definecolor{stringcolor}{rgb}{0.5, 0.3, 0.2}    %
\definecolor{symbolcolor}{rgb}{0.1, 0.2, 0.7}    %
\definecolor{sortcolor}{rgb}{0.1, 0.5, 0.1}      %
\definecolor{attributecolor}{rgb}{0.7, 0.1, 0.1} %
\definecolor{errorcolor}{rgb}{1, 0, 0}           %
\newtheorem{theorem}{Theorem}
\newtheorem{proposition}[theorem]{Proposition}
\newtheorem{corollary}[theorem]{Corollary}
\newtheorem{lemma}[theorem]{Lemma}
\newtheorem*{lemma*}{Lemma}
\theoremstyle{definition}
\newtheorem{definition}[theorem]{Definition}
\begin{document}

\title[Chain bounding, Zorn's lemma, and proof formalization]{Chain bounding, the leanest proof of Zorn's lemma, and an illustration of
  computerized proof formalization}
\date{}
\author[G.L. Incatasciato]{Guillermo L. Incatasciato}
\email{guillermo.incatasciato@mi.unc.edu.ar}

\author[P. Sánchez Terraf]{Pedro Sánchez Terraf}
\email{psterraf@unc.edu.ar}
\thanks{Universidad Nacional de Córdoba.  Facultad de Matemática, Astronomía,  Física y
  Computación.
  \\
  Centro de Investigación y Estudios de Matemática (CIEM-FaMAF),
  Conicet. Córdoba. Argentina.\\
  Supported by Secyt-UNC project 33620230100751CB and Conicet PIP project 11220210100508CO}

\begin{abstract}
  We present an exposition of the \emph{Chain Bounding Lemma}, which is a common
  generalization of both Zorn's Lemma and the Bourbaki-Witt fixed point theorem.
  The proofs of these results through the use of Chain Bounding are amongst the
  simplest ones that we are aware of. As a by-product, we show that for every
  poset $P$ and function $f$ from the powerset of $P$ into $P$, there exists a
  maximal well-ordered chain whose family of initial segments is appropriately closed
  under $f$.

  We also provide an introduction to the process of “computer formalization” of
  mathematical proofs by using \emph{proofs assistants}. As an illustration, we
  verify our main results with the Lean proof assistant.
\end{abstract}

\maketitle

\section{Introduction}
\label{sec:introduction}

This paper grew out of the search for an elementary proof of Zorn's Lemma. One
such proof was obtained by the first author, %
which is similar to the one by Lewin \cite{lewin}.%

After a careful examination, the authors realized that the method of proof
actually yielded a pair of new, similar principles: \emph{Chain Bounding} and the
\emph{Unbounded Chain Lemma}, which state the impossibility of finding strict
upper bounds of linearly ordered subsets of posets. The first one is more
fundamental, since it does not depend on the Axiom of Choice, which
states:
\begin{quote}
  ($\AC$) For every family $\{ A_i \mid i\in I \}$ of nonempty sets, there exists a
  function $f:I \to \bigcup_{i\in I} A_i$ such that for all $i\in I$, $f(i)\in A_i$.
\end{quote}
When Chain Bounding is coupled 
with $\AC$, it implies the second principle and then Zorn's Lemma. Chain Bounding
also implies the Bourbaki-Witt fixed point theorem; all these results are
in Section~\ref{sec:chain-bound-appl}. 

Our original proof of Chain Bounding proceeded by contradiction, where a few
relevant concepts were defined; this proof is essentially the one that
appears in Appendix~\ref{sec:zorns-its-own}, where it is used to show Zorn's
Lemma in a self-contained manner. We realized that it was better instead to
present those concepts (“good chains” and their comparability) independently to
obtain positive results. These appear in
Section~\ref{sec:greatest-good-chain}, and Chain Bounding is now proved as a
consequence of its main Theorem~\ref{th:greatest-good-chain}, the existence of a
greatest good chain.
Nevertheless, the main advantage of Chain Bounding in comparison to
Theorem~\ref{th:greatest-good-chain} is its straightforward statement and
consequences, which make it more appealing as a “quotable principle”.

As a way to discuss the correctness and level of detail of our
arguments, we introduce the subject of “computer formalization” of mathematics
in Section~\ref{sec:comp-form-math}, and in Appendix~\ref{sec:comp-form-lean} we
present a brief description of the verification of our main results, using the
\emph{Lean} proof assistant.

\textit{A word on our intended audience.} As one of the reviewers indicated,
different parts of this paper can be better suited for varied kinds of
readers. In general, the paper should be accessible to mature undergraduates,
but the main focus changes a bit across sections. Instructors can benefit from
having the succinct proof of Zorn's Lemma from Appendix~\ref{sec:zorns-its-own},
or directly adopting Chain Bounding as a tool if they are teaching a course on
posets. The discussion from Corollary~\ref{cor:unbounded-implies-AC} to the end
of Section~\ref{sec:chain-bound-appl} goes a little deeper into the details and
assume a bit more of set-theoretic background. Finally, we hope that the general
exposition of proof assistants be accessible to a wider public; the detailed
example in Appendix~\ref{sec:comp-form-lean} is a bit more challenging but might
be attractive to younger students which are more familiar with computer
technology.

\section{The greatest good chain}
\label{sec:greatest-good-chain}

We introduce some notation. Let $P$ be a poset and $C\subseteq P$; we say that
$s\in P$ is a \emph{strict upper bound} of $C$  if $\forall c\in
C,\ c<s$.  Furthermore, if $S\sbq C$, we say that $S$ is an \emph{initial
segment} of $C$ (“$S\segm C$”) if for all $x\in C$, $x\leq y \in S$ implies
$x\in S$. We will usually omit the word “initial” and simply say “$S$ is a
segment of $C$”. The strict version of the segment relation is denoted by
$S\propsegm C$, that is, $S\segm C$ and $S\neq C$.
Finally, $\Pow(X)$ denotes the powerset of the set  $X$.

\begin{definition}
  Let $g: \Pow(P) \to \Pow(P)$ be given.
  We say that a chain $C\subseteq P$ is \emph{good for $g$} if for all  $S\propsegm C$,
  $S\propsegm g(S) \segm C$.
\end{definition}
When understood from the context, we omit “for $g$”.
We have the following key result.
\begin{lemma}[Comparability]\label{lem:comparability}
  Let $P$ be a poset and $g: \Pow(P) \to \Pow(P)$. If $C_1$ and $C_2$ are
  good chains, one is a segment of the other.
\end{lemma}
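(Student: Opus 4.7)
The plan is to construct a ``largest common segment'' $S_0$ of $C_1$ and $C_2$, and use the goodness hypothesis to show that $S_0$ cannot be a proper segment of both. Concretely, let
\[
  \mathcal{S} \defi \{ S \sbq P : S \segm C_1 \text{ and } S \segm C_2 \},
\]
and set $S_0 \defi \bigcup \mathcal{S}$. Note that $\emptyset \in \mathcal{S}$, so this is well-defined.

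First I would check that $S_0 \in \mathcal{S}$, i.e.\ that the union of segments is itself a segment of each $C_i$. This is routine: if $x \in C_i$ and $x \leq y \in S_0$, pick some $S \in \mathcal{S}$ with $y \in S$; since $S \segm C_i$, we get $x \in S \sbq S_0$. Thus $S_0$ is the largest element of $\mathcal{S}$ under inclusion.

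The crux is then a dichotomy. If $S_0 = C_1$ then $C_1 \segm C_2$ and we are done; similarly if $S_0 = C_2$. Otherwise, $S_0 \propsegm C_1$ and $S_0 \propsegm C_2$. Now I invoke goodness, which is designed precisely for this situation: applied to $C_1$, the definition yields $S_0 \propsegm g(S_0) \segm C_1$; applied to $C_2$, it yields $S_0 \propsegm g(S_0) \segm C_2$. Crucially, $g(S_0)$ depends only on $S_0$, so the same set $g(S_0)$ witnesses both. Hence $g(S_0) \in \mathcal{S}$ and $g(S_0) \supsetneq S_0$, contradicting the maximality of $S_0$ as the union of $\mathcal{S}$.

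I don't foresee a real obstacle; the only delicate point is the definitional check that $S_0$ is itself a segment, and the conceptual observation that the \emph{same} value $g(S_0)$ is handed to us by the goodness of both chains, which is what forces them to agree beyond $S_0$. The argument does not use the Axiom of Choice, in line with the framing of Chain Bounding as a choice-free principle.
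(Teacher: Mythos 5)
Your proof is correct and is essentially the paper's own argument: both take the union $S_0$ of all mutual segments of $C_1$ and $C_2$, observe it is again a mutual segment, and derive a contradiction from goodness (namely $S_0 \propsegm g(S_0)$ with $g(S_0)$ a mutual segment) when $S_0$ is proper in both. Your write-up just makes explicit the routine verification that a union of segments is a segment and the observation that the same set $g(S_0)$ is produced by the goodness of either chain.
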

\begin{proof}
  Let $\calS$ be the family of mutual
  segments of both $C_1$ and $C_2$. Hence $\union \calS$ is also a
  mutual segment. If $\union \calS$ is different from both $C_1$
  and $C_2$, then $g(\union  \calS)$ should be a
  mutual segment since both are good; but this contradicts the fact that $g(\union
  \calS)\not\subseteq\union \calS$.
\end{proof}

\begin{lemma}\label{lem:union-good-chains}
  The union of a family $\calF$ of good chains is a good chain.
\end{lemma}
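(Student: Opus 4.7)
My plan is to let $U = \bigcup \calF$, verify first that $U$ is a chain, and then that every proper segment $S \propsegm U$ is properly extended by $g(S)$ with $g(S)\segm U$. Throughout, the Comparability Lemma will be the main tool: any two members of $\calF$ are comparable as segments of one another.

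For the chain property: given $x,y \in U$, pick $C_1,C_2 \in \calF$ containing them, and by comparability assume $C_1 \segm C_2$, so both $x,y$ lie in the chain $C_2$ and are comparable.

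The heart of the argument is the following. Suppose $S \propsegm U$, and pick $u \in U \setminus S$ with $u \in C$ for some $C \in \calF$. I would first establish the key inclusion $S \subseteq C$: given $x \in S$, choose $D \in \calF$ with $x \in D$ and apply comparability to $C,D$. If $D \segm C$ we are done; otherwise $C \segm D$, in which case $u \in C \subseteq D$ and $x,u$ lie in the chain $D$. I cannot have $u \leq x$ (that would force $u \in S$ since $S$ is a segment of $U$), so $x \leq u \in C$, and using $C \segm D$ we conclude $x \in C$. Once $S \subseteq C$, the segment property $S \segm C$ follows immediately from $S \segm U$ and $C \subseteq U$, and the witness $u \in C\setminus S$ upgrades this to $S \propsegm C$. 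Applying that $C$ is good yields $S \propsegm g(S) \segm C \subseteq U$.

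It remains to check that $g(S) \segm U$, which I expect to be the step most prone to sloppy handling and is the real obstacle. Given $x \in U$ with $x \leq y \in g(S)$, pick $D \in \calF$ with $x \in D$ and compare with $C$. If $D \segm C$ then $x \in C$, and the relation $g(S) \segm C$ puts $x$ in $g(S)$. If instead $C \segm D$, then $y \in g(S) \subseteq C \subseteq D$ and $x \in D$, so $x \leq y \in C$ together with $C \segm D$ again yields $x \in C$, and we conclude as before. This establishes $g(S) \segm U$, completing the verification that $U$ is a good chain.
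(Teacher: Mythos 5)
Your proof is correct and follows essentially the same route as the paper's: Comparability gives the chain property, a proper segment $S$ of $U$ is trapped inside a single good chain $C\in\calF$ containing a witness $u\in U\setminus S$, and the goodness of $C$ is then transferred back to $U$. The only cosmetic difference is that the paper isolates the observation that every good chain contained in $U$ is a segment of $U$ and invokes it twice, whereas you inline the corresponding comparability arguments (both when proving $S\subseteq C$ and when verifying $g(S)\segm U$).
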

\begin{proof}
  The union $U\defi \union \calF$ is a chain by Comparability.
  
  Note that every good chain $D$ such that $D\subseteq U$ is a segment of $U$:
  Suppose that $c\in U$ and $c<d\in D$. Then $c\in C$ for
  some good $C\in\calF$. If $C$ is a segment of $D$, we have $c\in D$ and are done. Otherwise, the
  converse relation holds by Comparability and then we also have
  $c\in D$.

  We will see that $U$ is good. Let $S\subsetneq U$ be a proper segment
  of $U$. Then, there exists $d\in U$ such that $\forall
  c\in S,\; c<d$. Let $D$ be a good chain such that $d\in D$.
  Since $D$ is a segment of $U$, all those $c$ belong to $D$.
  We conclude $S\subseteq D$, and since  $S$ is a
  segment of $U$, it is a segment of $D$ and it is proper because $d\in
  D \sm S$.  Then $g(S)$ is segment of $D$, and hence
  $g(S)$ is a segment of $U$.  
\end{proof}

By considering the union of \emph{all} good chains, we readily obtain:
\begin{theorem}[Greatest Good Chain]\label{th:greatest-good-chain}
  Let $P$ be a poset and $g: \Pow(P) \to \Pow(P)$. The family of all good
  chains has a maximum under inclusion.\qed
\end{theorem}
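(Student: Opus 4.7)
The plan is to simply take the union of all good chains and verify that it is itself a good chain, which will automatically be the maximum under inclusion. All the heavy lifting has already been done in the previous two lemmas, so this theorem is essentially a one-line corollary.

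More precisely, I would let $\calF$ be the family of \emph{all} good chains in $P$ (for the given $g$), and set $M \defi \union \calF$. By Lemma~\ref{lem:union-good-chains}, which tells us that the union of any family of good chains is again a good chain, $M$ itself is a good chain. Thus $M \in \calF$, while by construction $D \subseteq M$ for every $D \in \calF$. Hence $M$ is the maximum of $\calF$ under inclusion.

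I do not anticipate any genuine obstacle here, since Lemma~\ref{lem:union-good-chains} is stated for an arbitrary family $\calF$ of good chains, and in particular applies to the family of all of them — there is no size or foundational issue in forming this union, because $\calF \subseteq \Pow(P)$ is a set. The only mild subtlety worth mentioning is that one should not worry about $\calF$ being empty: the empty chain $\emptyset$ is vacuously good (there are no proper segments of it), so $\calF$ is nonempty, and indeed $M \supseteq \emptyset$ trivially. This completes the proof.
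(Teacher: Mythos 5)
Your proposal is correct and matches the paper's argument exactly: the theorem is stated as an immediate consequence of Lemma~\ref{lem:union-good-chains} applied to the family of all good chains, whose union is then itself good and hence the maximum. The aside about the empty family is harmless but unnecessary, since Lemma~\ref{lem:union-good-chains} already guarantees the union is a good chain (hence a member of the family) without any nonemptiness hypothesis.
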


\section{Chain Bounding and applications}
\label{sec:chain-bound-appl}

The following lemma is the key to all of what follows. In some
sense, it might be regarded as a non-$\AC$ version of Zorn's lemma.
\begin{lemma}[Chain Bounding]\label{lem:chain-bounding}
  Let $P$ be a poset. There is no assignment of a strict upper bound to each
  chain in $P$.
\end{lemma}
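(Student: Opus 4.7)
The plan is to argue by contradiction using Theorem~\ref{th:greatest-good-chain}. Suppose toward contradiction that there is a function $h$ that assigns to every chain $C\subseteq P$ a strict upper bound $h(C)\in P$. Define $g: \Pow(P)\to\Pow(P)$ by
\[
  g(S) \defi
  \begin{cases}
    S\cup\{h(S)\} & \text{if $S$ is a chain,} \\
    S & \text{otherwise}
  \end{cases}
\]
(the second clause is irrelevant, since good chains only require the behavior of $g$ on chains). By Theorem~\ref{th:greatest-good-chain} applied to this $g$, there is a greatest good chain $M$. The crux will be to show that $g(M) = M\cup\{h(M)\}$ is itself a good chain; since $h(M)\notin M$ (as $h(M)$ is a \emph{strict} upper bound of $M$), this $g(M)$ properly extends $M$ and contradicts maximality.

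To prove $g(M)$ is a good chain, first note it is a chain because $h(M)$ is comparable with every element of $M$. Next, the proper segments of $g(M)$ must be analyzed. I would observe that if $T\propsegm g(M)$ then $h(M)\notin T$: otherwise, since every element of $M$ is strictly below $h(M)\in T$, the segment property would force all of $M$ into $T$, giving $T=g(M)$. Consequently $T\subseteq M$, and moreover $T\segm M$, because $M\segm g(M)$ (any $x\in g(M)$ lying below some element of $M$ cannot be $h(M)$, so $x\in M$) and the segment relation is transitive.

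Now split into two cases. If $T\propsegm M$, goodness of $M$ gives $T\propsegm g(T)\segm M$, and transitivity together with $M\segm g(M)$ yields $T\propsegm g(T)\segm g(M)$. If instead $T=M$, then $g(T)=g(M)$ and one has trivially $M\propsegm g(M)\segm g(M)$, the strict inclusion again from $h(M)\notin M$. In both cases the required condition for goodness holds, so $g(M)$ is a good chain properly containing $M$, contradicting the maximality of $M$.

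The main obstacle, and really the only place where care is needed, is the verification that $M\segm g(M)$ and that $h(M)$ cannot belong to any proper segment of $g(M)$; both are immediate once we use that $h(M)$ is a strict upper bound of $M$. Beyond that, the argument is just transitivity of the segment relation and an appeal to the Greatest Good Chain theorem.
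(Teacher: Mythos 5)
Your proposal is correct and follows essentially the same route as the paper: apply the Greatest Good Chain theorem to $g(C)=C\cup\{h(C)\}$ and derive a contradiction from the fact that $g(M)$ is a good chain strictly containing the greatest good chain $M$. The only difference is that you spell out the verification that $g(M)$ is good (which the paper dismisses as ``easily seen''), and your verification is sound---though note that the step $T\segm g(M)$, $T\subseteq M\subseteq g(M)$ $\Rightarrow$ $T\segm M$ is not transitivity but the simpler fact that a segment of a set is a segment of any intermediate subset containing it.
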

\begin{proof}
  Assume, by way of contradiction, that $f(C)$ is
  a strict upper bound of $C$ for each chain $C\sbq P$. Hence $C$ is a proper
  segment of $g(C) \defi C\cup  \{f(C)\}$; extend this $g$ arbitrarily to the
  rest of the  subsets of $P$. By
  Theorem~\ref{th:greatest-good-chain}, there exists a greatest good chain
  $U$ for $g$. But this is a contradiction, since $g(U)$ is easily
  seen to be a good chain, but $g(U)\not\subseteq U$.
\end{proof}
The wording of the Chain Bounding Lemma is a bit awkward, since it is actually a
negation. However, if we now invoke $\AC$, we get the following
more natural statement.
\begin{lemma}[Unbounded Chain]\label{lem:unbounded-chain}
  Assume $\AC$. For every poset $P$ there exists a chain $C\subseteq P$ with no strict upper bound.
\end{lemma}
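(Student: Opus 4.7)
The plan is to argue by contradiction, reducing Unbounded Chain to Chain Bounding via a single application of the Axiom of Choice. Suppose, toward a contradiction, that every chain $C \subseteq P$ admits at least one strict upper bound; that is, the set
\[
U(C) \defi \{s \in P : \forall c \in C,\; c < s\}
\]
is nonempty for every chain $C \subseteq P$.

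The family $\{U(C) : C \text{ is a chain in } P\}$ is then a family of nonempty subsets of $P$, indexed by the set of chains of $P$. Applying $\AC$ to this family yields a function $f$ sending each chain $C$ to some element $f(C) \in U(C)$, i.e., $f(C)$ is a strict upper bound of $C$. But this is precisely an assignment of a strict upper bound to each chain in $P$, which is ruled out by the Chain Bounding Lemma. The contradiction shows that some chain $C \subseteq P$ must satisfy $U(C) = \emptyset$, which is the required statement.

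I do not anticipate a real obstacle here: the proof is essentially the direct dualization of Chain Bounding via $\AC$, and the only point worth being careful about is that the index set of the family fed into $\AC$ is indeed a set (the collection of chains of $P$, which is a subset of $\Pow(P)$) rather than a proper class, so that the version of $\AC$ stated in the introduction applies verbatim.
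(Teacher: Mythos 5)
Your proposal is correct and follows exactly the paper's own proof: assume every chain has a strict upper bound, use $\AC$ to choose one for each chain, and contradict Chain Bounding. The extra remark that the index set is a genuine set (a subset of $\Pow(P)$) is a nice bit of care that the paper leaves implicit.
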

\begin{proof}
  By way of contradiction, assume that for every chain $C \subseteq P$ there
  exists a strict upper bound. Using $\AC$, let $f$ assign to each
  $C$ such a bound. But this contradicts Chain Bounding.
\end{proof}

We now turn to applications. The first one is very simple
proof of
Zorn's Lemma (obviously taking into account the lemmas proved so far).
\begin{corollary}[Zorn]
  If a  poset $P$ contains an upper bound for each chain, it has a maximal element.
\end{corollary}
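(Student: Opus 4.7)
The plan is to derive Zorn's Lemma from the Unbounded Chain Lemma, which is the $\AC$-enhanced form of Chain Bounding already proved above. The key observation is that the hypothesis of Zorn (every chain has an upper bound) plus the negation of the conclusion (no maximal element exists) together imply that every chain actually has a \emph{strict} upper bound, contradicting either of our principles.

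More concretely, I would argue by contradiction: suppose $P$ has no maximal element. I invoke Lemma~\ref{lem:unbounded-chain} to obtain a chain $C\subseteq P$ that admits no strict upper bound. By the hypothesis of the corollary, $C$ has some upper bound $b\in P$, meaning $c\leq b$ for every $c\in C$. Since $b$ is not maximal (no element of $P$ is), there exists $s\in P$ with $b<s$. Then for every $c\in C$ we have $c\leq b <s$, so $c<s$, showing that $s$ is a strict upper bound of $C$. This contradicts the defining property of $C$, so the maximal element must exist.

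There is no real obstacle here: the argument is essentially a one-line bridge between ``upper bound'' and ``strict upper bound'' via the assumed non-maximality. The only thing worth being careful about is that the upper bound $b$ provided by the hypothesis is itself allowed to lie in $C$ (so we need $b<s$ rather than merely $b\neq s$ to conclude strictness), which is automatic from non-maximality. One could alternatively bypass Lemma~\ref{lem:unbounded-chain} and use Chain Bounding directly, by using $\AC$ to pick, for each chain $C$, an upper bound $b_C$ and then some $s_C>b_C$, producing a strict-upper-bound assignment; but routing through the Unbounded Chain Lemma is shorter and already packages the use of $\AC$.
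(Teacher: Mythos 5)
Your proof is correct and follows essentially the same route as the paper: both invoke the Unbounded Chain Lemma to obtain a chain $C$ with no strict upper bound and then observe that any upper bound of $C$ (which exists by hypothesis) must be maximal, since otherwise a strictly larger element would be a strict upper bound of $C$. The only difference is cosmetic---you wrap the argument in a global proof by contradiction, whereas the paper states it directly.
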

\begin{proof}
  By the Unbounded Chain Lemma, take $C\subseteq P$ without strict upper
  bounds. Then any upper bound of $C$ must be maximal in $P$.
\end{proof}
A direct, self-contained proof of Zorn's Lemma condensing all the ideas
discussed up to this point appears in Appendix~\ref{sec:zorns-its-own}
below. This includes some simplifications that also apply to a direct proof of
Chain Bounding; for instance, the definition of good chain is a bit shorter and one
only needs to show that the union of all good chains is good.

Since Zorn's Lemma implies $\AC$, we immediately have:
\begin{corollary}\label{cor:unbounded-implies-AC}
  The Unbounded Chain Lemma is equivalent to $\AC$ over Zermelo-Fraenkel set
  theory.\qed
\end{corollary}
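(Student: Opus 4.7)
The plan is to observe that both directions are essentially already in hand. The forward direction, $\AC \Rightarrow$ Unbounded Chain Lemma, is precisely Lemma~\ref{lem:unbounded-chain}, proved in the excerpt just above.

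For the converse, I would chain two implications already present or classical. First, the preceding Corollary (Zorn) shows that the Unbounded Chain Lemma implies Zorn's Lemma: take a chain with no strict upper bound, and any upper bound of it is maximal. Second, I would invoke the classical fact that Zorn's Lemma implies $\AC$, which the statement itself alludes to with the phrase ``Since Zorn's Lemma implies $\AC$''. If I wanted to spell this out, I would argue as follows: given any family $\{A_i \mid i\in I\}$ of nonempty sets, form the poset $\calP$ of partial choice functions, i.e.\ functions $f$ with $\dom(f)\sbq I$ and $f(i)\in A_i$ for each $i\in\dom(f)$, ordered by extension. Any chain in $\calP$ is bounded above by its union, so Zorn's Lemma yields a maximal partial choice function $f^*$, whose domain must be all of $I$ --- otherwise, picking any $i_0 \notin \dom(f^*)$ and any element of the nonempty set $A_{i_0}$ would produce a strict extension of $f^*$, contradicting maximality.

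The main obstacle is essentially nil; this corollary is a bookkeeping statement that closes the circle $\AC \Rightarrow$ Unbounded Chain Lemma $\Rightarrow$ Zorn $\Rightarrow \AC$ into an equivalence. The only stylistic decision is whether to spell out the Zorn-implies-$\AC$ step or simply cite it as classical; given the elementary spirit of the paper and that the \qed appearing in the statement suggests the authors treat it as immediate, a bare reference to the two chained implications should suffice.
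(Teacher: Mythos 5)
Your proposal is correct and matches the paper's own (implicit) argument exactly: the forward direction is Lemma~\ref{lem:unbounded-chain}, and the converse chains the Zorn corollary with the classical fact that Zorn's Lemma implies $\AC$, which is precisely what the sentence preceding the corollary invokes. Your optional spelling-out of the Zorn-implies-$\AC$ step via maximal partial choice functions is standard and correct, though the paper leaves it as a cited classical fact.
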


Kunen points out in \cite{kunen2011set} that, for those not familiar with set
theory, it may not be clear why Zorn's Lemma should be true, since the
best-known proofs make use of ordinals and transfinite recursion. 
Our highest hope is that after seeing our proof, the old joke turns
into \emph{“$\AC$ is obviously true, the Well-Ordering Theorem is obviously
false, and Zorn's Lemma\dots holds by Chain Bounding!”}.

Lewin, in \cite{lewin}, provides a very short proof without the need for
ordinals or recursion, but making use of well-ordered chains.
In \cite{lang}, Lang
presents a proof using the Bourbaki-Witt fixed point theorem of order theory,
without even employing the concept of well-ordered
set. Finally, Brown \cite{brown} gives a beautiful and simple proof inspired
by Lang's but without the need for Bourbaki-Witt. This proof is slightly
indirect since it actually proves the Hausdorff Maximal Principle and considers
“closed” subsets in the poset of chains of the original poset ordered by
inclusion. (Other proofs in the same spirit can be found in Halmos
\cite{halmos1960naive} and Rudin \cite{Rudin}.)

The proof we presented here is not as short as Lewin's but it is more
elementary since there is no use of (the
basic theory of) well-orders in an explicit way. The main
difference in method that allows us to avoid them is the generalization of
his definition of “conforming chains” by considering general initial
segments instead of \emph{principal} ones (i.e., of the form $\{ x\in P \mid x\leq p \}$ for some
$p\in P$). This move allows us to use the stronger expressiveness achieved by
talking about general segments (indirectly referring to the powerset of $P$), but
avoids referring to “second order” chains (i.e., chains in the poset of
chains).

In spite of this simplification, the fundamental character of the
concepts of well-order and well-foundedness in general should be strongly emphasized. These are unavoidable in
a sense; actually good chains for functions that add at most one element (such
as the one in the proof of Chain Bounding)  are well-ordered:
\begin{proposition}\label{prop:good-well-ordered}
  Let $(P,<)$ be poset, $f:\Pow(P) \to P$ and let $g(C) \defi C\cup\{ f(C)
  \}$. Every good chain for $g$ is  well-ordered by $<$.
\end{proposition}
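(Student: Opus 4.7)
The plan is to show well-foundedness directly: given an arbitrary nonempty subset $A\subseteq C$, I will exhibit a minimum of $A$ by applying the goodness of $C$ to a suitably chosen proper segment. Since $C$ is already linearly ordered (being a chain), this suffices.

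The segment I will use is the set $S \defi \{x\in C : \forall a\in A,\ x<a\}$ of strict lower bounds of $A$ in $C$. First I would verify that $S$ is a segment of $C$: if $x\in S$ and $y\in C$ satisfy $y\leq x$, then $y\leq x<a$ for every $a\in A$, so $y\in S$. Next, $S\propsegm C$ because picking any $a_0\in A$ witnesses $a_0\notin S$ (since $a_0\not< a_0$). Now, because $C$ is good for $g$, we have $S\propsegm g(S)=S\cup\{f(S)\}\segm C$, which forces $f(S)\in C\setminus S$.

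It remains to check that $f(S)$ is the minimum of $A$. Since $f(S)\notin S$ and $C$ is linearly ordered, there exists $a\in A$ with $a\leq f(S)$. Conversely, if some $a'\in A$ satisfied $a'<f(S)$, then using that $g(S)$ is a segment of $C$ and $a'\in C$ with $a'<f(S)\in g(S)$, we would get $a'\in g(S)=S\cup\{f(S)\}$; but $a'\neq f(S)$, so $a'\in S$, contradicting $a'<a'$. Therefore $f(S)$ is a lower bound of $A$, and combined with $a\leq f(S)$ for some $a\in A$, we conclude $f(S)=a\in A$ and $f(S)=\min A$.

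The main obstacle I anticipate is the bookkeeping around the segment $S$: one must be careful that the definition of $S$ really depends on $A$ only through membership (so that one does not accidentally need $A$ to be a segment itself), and that the linearity of $C$ is used to turn $f(S)\notin S$ into the existence of an element of $A$ below or equal to $f(S)$. Once those small points are in place, the argument is a short triangulation between goodness of $C$ and the segment property of $g(S)$.
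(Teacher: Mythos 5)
Your proof is correct, and it shares the paper's main idea --- apply goodness of $C$ to the segment $S$ of strict lower bounds of the nonempty subset, and show that $f(S)$ is the minimum --- but the two arguments finish differently. Having obtained $f(S)\in C\sm S$ and some $x\in A$ with $x\leq f(S)$, the paper introduces the auxiliary subchain $D\defi\{c\in C\mid c\leq x\}$, invokes the fact (left to the reader) that every segment of a good chain is again good, and applies goodness of $D$ to get $f(S)\in D$, i.e.\ $f(S)\leq x$. You instead never leave the chain $C$: you use only the already-established relation $g(S)\segm C$ to argue that any $a'\in A$ with $a'<f(S)$ would be pulled into $g(S)=S\cup\{f(S)\}$, hence into $S$, contradicting $a'<a'$. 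This makes your version slightly more self-contained, since it dispenses entirely with the auxiliary lemma about segments of good chains being good; the paper's route, on the other hand, isolates that hereditary property, which is a reusable fact about goodness in its own right. Both arguments use linearity of $C$ in the same two places (to convert $f(S)\notin S$ into $\exists a\in A,\ a\leq f(S)$, and to conclude that a lower bound belonging to $A$ is the minimum), and your bookkeeping on $S$ being a proper segment is exactly as in the paper.
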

\begin{proof}
  We leave to the reader the verification of the fact, under the
  assumptions, that every segment $D$ of a good chain is good.
  
  Let $C$ be a good chain and assume $X\subseteq C$ is nonempty. Let $S$ be the
  set of strict \emph{lower} bounds of $X$ in $C$. Since $X\neq\varnothing$, $S$
  is a proper segment of $C$, and goodness ensures that $S\neq g(S)=S\cup\{ f(S) \}$
  is also a segment of $C$. Since $f(S)\notin S$, there is some $x\in X$ such
  that $x\leq f(S)$. We claim that any such $x$ must be equal to $f(S)$ and
  hence it is the minimum element of $X$. For this, consider
  the good subchain $D\defi \{ c\in C \mid c\leq x \}$ of $C$. Since $S$ is
  likewise a proper segment of $D$, $f(S)\in g(S) \subseteq D$ and hence we obtain the claim.  
\end{proof}
Moreover, it can be shown that a chain of a poset $P$ is well-ordered if and
only it is a good chain for some $g$ as above.

Our second application is the aforementioned fixed point theorem.
\begin{corollary}[Bourbaki-Witt]
  Let $P$ be a non-empty poset such every chain $C\subseteq P$ has a least upper
  bound. If $h : P\rightarrow P$ satisfies $x \leq h(x)$ for all $x\in P $, then
  $h$ has a fixed point, i.e., there is some $x\in P$ such that $x=h(x)$.
\end{corollary}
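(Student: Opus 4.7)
The plan is to prove Bourbaki-Witt by contradiction, using Chain Bounding directly. Suppose for a contradiction that $h$ has no fixed point; then $x \leq h(x)$ together with $x \neq h(x)$ gives $x < h(x)$ strictly for every $x \in P$. The goal is to use this to manufacture an assignment of strict upper bounds to every chain of $P$, which will contradict Lemma~\ref{lem:chain-bounding}.

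First I would handle nonempty chains. Given a chain $C \subseteq P$, let $s_C$ be its least upper bound, which exists by hypothesis. Then $s_C < h(s_C)$ by the no-fixed-point assumption, and for every $c \in C$ we have $c \leq s_C < h(s_C)$, so $c < h(s_C)$. Hence $h(s_C)$ is a strict upper bound of $C$. For the empty chain, every element of $P$ is vacuously a strict upper bound, and since $P$ is nonempty we can pick any fixed element $x_0 \in P$. Defining $f(C) \defi h(s_C)$ for $C \neq \varnothing$ and $f(\varnothing) \defi x_0$ therefore provides a strict upper bound for every chain in $P$, contradicting Chain Bounding.

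There is no real obstacle here: the only substantive step is observing that $x \leq h(x)$ together with $x \neq h(x)$ upgrades to strict inequality, and that the least upper bound of a chain, when fed to $h$, strictly dominates every element of the chain. Notice also that the proof does not invoke $\AC$, reflecting the fact that Chain Bounding is an $\AC$-free principle and that Bourbaki-Witt is likewise provable in $\axiomas{ZF}$ alone.
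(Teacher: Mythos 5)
Your proof is correct and follows essentially the same route as the paper's: assume no fixed point, upgrade $x \leq h(x)$ to $x < h(x)$, and observe that $C \mapsto h(\sup C)$ assigns a strict upper bound to every chain, contradicting Chain Bounding. Your explicit handling of the empty chain is a harmless extra precaution (the paper's $f(C) := h(\sup C)$ already covers it, since the hypothesis gives $\sup\varnothing$ a meaning), and your closing remark about $\AC$-freeness matches the paper's own observation.
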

\begin{proof}
  Assume by way of contradiction that $x<h(x)$ for all $x\in P$. But then $f(C)
  := h(\sup C)$ immediately contradicts Chain Bounding.
\end{proof}
Note that the greatest good chain for $C
\stackrel{g}{\longmapsto} C \cup \{ h(\sup C) \}$ is the least complete subposet
of $P$ closed under $h$, and its ordinal length is (the successor of) the number of iterations of
$h$ needed to reach its least fixed point starting from the bottom element of
$P$.

It is relevant here that Chain Bounding does not depend on $\AC$, since
Bourbaki-Witt can be proved without using it. This is another reason why we
consider Chain Bounding the central item of this work.

\section{Computer formalization of mathematics}
\label{sec:comp-form-math}

Although Lewin's proof is even shorter than ours, it could be debatable whether
it is more straightforward for a general audience. Specifically, some
acquaintance with the basic theory of well-orders seems to be required to
understand it in full; in other words, not all the details of the proof are
completely disclosed (note, e.g. the Mathematics Stack Exchange questions
\cite{stackexchange_lewin} and \cite{stackexchange_lewin2} and their answers).

Our combo Chain-Bounding/Zorn is not immune to the same criticism: Some tricky
details could still have been swept under the rug---or, even worse, some
mistake. The question presents itself: How could one assert that all details of
a proof have been taken care of, and how to be sure that absolutely all steps
are correct? This concern will certainly connect in the reader's mind to the
“rigorization” process in Mathematics during the late XIX century and perhaps to
the foundational aspects of mathematical logic. It might also raise an alert of
sorts (recalling Russell and Whitehead's \emph{Principia}) of the infeasibility
of the task.

The fact of the matter is that thanks to a steady development in Computer
Science, modern programming technologies called \emph{proof assistants} (PAs)
have been created, that allow one to write a mathematical proof in a formal language
and the computer can then verify it for correctness. This process is called
“formalization”, “(formal) verification”, or “mechanization”.

\subsection{Use cases}
PAs not only make it possible to trace every single detail of simple results
such as the ones in this paper; highly non trivial and recent research-level
mathematical results have been checked in diverse PAs. Notable formalizations include the
Four Color Theorem \cite{10.1007/978-3-540-87827-8_28}, the Odd-Order Theorem
\cite{10.1007/978-3-642-39634-2_14} (both using the Coq PA), and the proof of the Kepler
Conjecture \cite{MR3659768} (using a combination of HOL-Light and Isabelle/HOL
PAs). Even more recently, Gowers, Green, Manners, and Tao settled the Polynomial Freiman-Ruzsa
conjecture \cite{2023arXiv231105762G}, sharing their results on
November 9th, 2023; Tao started a project \cite{digitisation-pfr} to formalize them (including many
preliminaries) using the \emph{Lean} PA \cite{DBLP:conf/cade/Moura021}
and it was finished \cite{pfr-anounce} less than a month
after the manuscript was released! The process was described by Tao in his blog
\cite{formalizing-pfr}.

There are also many ongoing attempts to use PAs in teaching. A regular workshop
\cite{Narboux2024} discusses the potential uses of the latter and other computer
technologies in education, and there are actually many courses that already use
PAs. Examples are Massot's calculus course \cite{massot-course} for the second semester of BSc,
and Macbeth's Math~2001 at Fordham University, supported by an excellent
introductory material \cite{macbeth-course} to Lean. More pointers to the use of
Lean in education can be found in Avigad's talk
\cite{formal-assistants-edu-avigad} at the Fields Institute.

\subsection{A bird's eye comparison}
To this day, there are plenty of fine pieces describing the different kinds PAs
available, and their use, benefits and pitfalls. To name a few, Harrison, Urban,
and Wiedijk recall the rich history of PAs in their
\cite{DBLP:series/hhl/HarrisonUW14}, while Koutsoukou-Argyraki shares her
experiences using Isabelle/HOL in \cite{koutsoukou-argyraki_2020} and, after
presenting an insightful summary, interviews some of the relevant figures
involved with this technology in her \cite{angeliki}. Even a recent article in Nature
magazine discusses PAs \cite{nature-lte}.
Having all these resources at hand, we will only briefly overview some details
of current PAs.

PAs come in a great variety, both internally/foundationally speaking and from
the user point of view. Regarding foundations, most of them differ from the
traditional set-theoretic ones, being based on several kinds of \emph{type
theories}. For the purpose of this paper, we can take “type” and “set” to be
synonyms, but the main difference is that every mathematical object in
consideration belongs to \emph{exactly one} type; as an example of this
stipulation, a natural number $n$ is not to be considered a real one, but
instead there is an appropriate embedding $\N\hookrightarrow \R$ which sends $n$
to the corresponding real. Isabelle/HOL and HOL-Light are based on the “simple”
theory of types; the axiomatics of the latter PA is surprisingly terse, and a
description can be found at Hales \cite{MR2463990}. On the other hand, Coq and
Lean support “dependent type theory”, which is stronger.%
\footnote{%
  There have been interesting debates (e.g., Buzzard's challenge to the Isabelle
  community \cite{buzzard-icm-2022}) on what is the actual impact of the
  differing strengths of PAs' foundations concerning the possibility to
  formalize research-level mathematics. We only point out that HOL (simple type
  theory) systems are weaker than Zermelo-Fraenkel set theory with Choice, which
  in turn is weaker than the dependent type theory with universes used in
  Lean. We are also not discussing the \emph{constructivist} aspect of
  foundations, which may enable or prevent the extraction of algorithms from the
  formalized proofs.
}

There are notable differences also in the language used by PA to write proofs
and the overall user interfaces. Some PAs are actually full-fledged programming
languages that are also suited to formalized proofs (e.g., Agda, Lean), while
some others may produce computer programs from a proof (Coq,
Isabelle/HOL). Concerning the way proofs are written, two major flavors are
available \cite[Sect.~6.2]{DBLP:series/hhl/HarrisonUW14}:
\begin{itemize}
\item \emph{declarative} proofs state intermediate facts that lead to the
  expected result (“We have $X$, hence $Y$, and then $Z$”); and
\item \emph{procedural} ones, which consist in a series of instructions to be
  performed to current “goal” or assertion to be proved (“Subtract $x$ from both
  sides, unfold $f$'s definition, conclude”).
\end{itemize}
In a very rough first approximation, procedural proofs are easier to write (and
more useful for exploration) and declarative proofs are easier to read (in some
cases, even without the need to use the corresponding PA to examine them).

Most PAs support a combination of both methods, but there are some extreme
cases. For instance, the Naproche variant of Isabelle is completely declarative
in the sense that it tries to solve each intermediate step automatically; this
results in beautiful formal proofs \cite{10.1007/978-3-030-81097-9_2} but it is
somewhat limited in power. On the other end, proofs in Agda are constructive,
and what one actually does is to define a function that provides the
witness/result required by the statement.

Most of our experience comes from using both Isabelle and Lean. Isabelle allows
its users
to write \emph{proof documents} that can be read without the need of a computer;
but it should be emphasized that this requires extra effort, and people do not
always formalize things in this way. Nevertheless, the HOL variant of Isabelle has
a noteworthy component (\emph{Sledgehammer}) able to deal automatically with many of the intermediate
steps. Lean, on the other hand, uses a more procedural
way to work. After consulting on this point, it was observed that human-readable
versions of the proofs could be derived from the formal artifacts, as
exemplified during Massot's talk \cite{massot-map2023} at the 2023 IPAM Workshop
on \emph{Machine Assisted Proofs}. Tools like Sledgehammer are also being
actively developed for Lean.

A further difference between these two PAs might be the composition of the user
base. Thanks to several high-profile projects, many mathematicians have started to
use Lean during the recent years; and while there are mathematicians working
with Isabelle, we believe their community hosts a majority of computer
scientists.

In Appendix~\ref{sec:comp-form-lean}, we present the formalization of the main
results of the paper using the Lean PA, which may serve to showcase the look of
the code using it and some characteristics of its user interface.
We also invite the reader to visit the
website
\begin{center}
  \url{https://leanprover-community.github.io/learn.html}
\end{center}
where many indications can be found on how to start using Lean, and the online
forum
\begin{center}
  \url{https://leanprover.zulipchat.com/}
\end{center}
fosters the growing community of users
and is very welcoming to newcomers.

\appendix

\section{Zorn's on its own}
\label{sec:zorns-its-own}

For ease of reference, we streamline the arguments above to obtain a compact
version of the proof.

Recall that for a poset $P$,
$s\in P$ is a \emph{strict upper bound} of $C\sbq P$ if $\forall c\in C,\ c<s$; and
that $S\sbq C$ is a \emph{segment} of $C$ if for all
$x\in C$, $x\leq y \in S$ implies $x\in S$.
\begin{lemma*}[Zorn]
  If a  poset $P$ contains an upper bound for each chain, it has a maximal element.
\end{lemma*}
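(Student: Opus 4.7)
My plan is to collapse the main-text development into a single self-contained argument by contradiction. Assuming $P$ has no maximal element, every upper bound of a chain fails to be maximal and hence admits a strict successor, so every chain has a strict upper bound. Using $\AC$, I would fix once and for all a function $f : \Pow(P) \to P$ such that $f(C)$ is a strict upper bound of every chain $C \sbq P$.

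Next I would specialize the notion of good chain to this particular $f$: a chain $C$ is \emph{good} if $S \cup \{f(S)\} \segm C$ for every proper segment $S \propsegm C$. This slightly streamlines the main-text definition, since the auxiliary function $g$ is inlined as $g(S) \defi S \cup \{f(S)\}$ and the clause $S \propsegm g(S)$ is automatic because $f(S) \notin S$ (as $f(S)$ is a strict upper bound of $S$).

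The heart of the proof is a single lemma asserting that the union $U$ of all good chains is itself a good chain. To keep this to one step, I would first observe that any two good chains $C_1, C_2$ are comparable as segments: letting $M$ be the union of their common segments (again a common segment), if $M$ were proper in both, then goodness of each $C_i$ would make $M \cup \{f(M)\}$ a strictly larger common segment, a contradiction. From this it follows both that $U$ is a chain and that every good chain $D \sbq U$ is in fact a segment of $U$. To verify goodness of $U$ at a proper segment $S \propsegm U$, I would pick $d \in U \sm S$ and a good chain $D$ containing $d$; since $U$ is a chain and $S$ is a segment of $U$, every $s \in S$ satisfies $s < d$, which together with $D \segm U$ forces $S \propsegm D$, and goodness of $D$ yields $S \cup \{f(S)\} \segm D \segm U$.

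The contradiction is then immediate: since $U$ is good, $U \cup \{f(U)\}$ is also good—its only new proper segment is $U$ itself, and the defining condition holds trivially there—so by maximality of $U$ we would have $f(U) \in U$, contradicting that $f(U)$ is a strict upper bound of $U$. The only delicate point I anticipate is packaging the comparability argument inside the ``union is good'' lemma so that no separate statement is needed; the rest is a direct transcription of the earlier reasoning.
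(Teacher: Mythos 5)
Your proof is correct and follows essentially the same route as the paper's self-contained argument (Appendix A): the same notion of good chain with $g(S)=S\cup\{f(S)\}$, the same comparability-via-union-of-mutual-segments step, the same verification that the union of all good chains is good, and the same final contradiction with $g(U)\not\subseteq U$. The streamlining you note (inlining $g$ and dropping the clause $S\propsegm g(S)$) is exactly the simplification the paper itself points out for the direct proof.
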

\begin{proof}
  Assume by way of contradiction that $(P,\leq)$ does not have a maximal
  element. Hence, for every chain $C \subseteq P$ there exists a strict
  upper bound (otherwise, any upper bound of $C$ would be
  maximal). Using the Axiom of Choice, let $g$ assign $C\cup\{s\}$ to each chain $C\subseteq
  P$, where $s$ is any such bound.

  A chain $C\sbq P$ is deemed to be \emph{good} whenever
  \begin{quote}
    (*) If $S\neq C$  is a segment of $C$, then
    $g(S)$ also is.
  \end{quote}
  We need the following property of good chains:
  \begin{quote}
    (\emph{Comparability}) If $C_1, C_2$ are good, one is a
    segment of the other.
  \end{quote}
  To prove it,
  let $\calS$ be the family of mutual
  segments of both $C_1$ and $C_2$. Hence $\union \calS$ is also a
  mutual segment. If $\union \calS$ is different from both $C_1$
  and $C_2$, then $g(\union  \calS)$ should be a
  mutual segment by (*); but this contradicts the fact that $g(\union
  \calS)\not\subseteq\union \calS$.

  Let $U$ be the union of all good chains, which is a chain by Comparability.

  Note that every good chain $D$ is a segment of $U$: Suppose that $c\in U$
  and $c<d\in D$. Then $c\in C$ for some good $C$. If $C$ were not a segment of
  $D$, then the converse relation would hold by Comparability and
  then we would also have $c\in D$.

  We will see that $U$ is good. Let $S\subsetneq U$ be a proper segment
  of $U$. Then, there exists $d\in U$ such that $\forall
  c\in S,\; c<d$. Let $D$ be a good chain such that $d\in D$.
  Since $D$ is a segment of $U$, all those $c$ belong to $D$.
  We conclude $S\sbq D$, and since  $S$ is a
  segment of $U$, it is a segment of $D$ and it is proper because $d\in
  D \sm S$.  Then $g(S)$ is segment of $D$, and hence
  $g(S)$ is a segment of $U$.

  We reach a contradiction, since $g(U)$ is also a good
  chain, but $g(U)\not\subseteq U$.
\end{proof}

\section{A computer formalization in Lean}
\label{sec:comp-form-lean}

In this section, we present the formalization of the main results of the paper
using the Lean proof assistant. It takes up 420 lines of
code and it is available at
\begin{center}
  \url{https://github.com/sterraf/ChainBounding},
\end{center}
For the remainder of this section, we will focus on explaining only a fraction of the details involved (and
in particular, some notations will not be dealt with); our main objective is to
make a point that it is possible to translate mathematical reasoning into the
computer, in a way that at least partially resembles the way it is done on paper. We hope that the
reader's curiosity will be sufficiently motivated in order to visit the
mentioned resources and to learn more about formalization and Lean. 

We
start our Lean file by \emph{importing} basic results on chains, and the
definition of complete partial orders (which appear in the Bourbaki-Witt
Theorem).

\begin{lstlisting}
import Mathlib.Order.Chain
import Mathlib.Order.CompletePartialOrder

variable {α : Type*}
\end{lstlisting}
The last line above indicates that we will be talking about a “type” $\alpha$ (which, in
the type theory of Lean roughly corresponds to a set, or perhaps more
appropriately, a set \emph{underlying} some structure). Greek letters are
commonly used for types, and here this $\alpha$ will replace our $P$ from above.

We highlight some of the basic definitions. For instance, “$S$ is a
(proper) segment of $C$” is defined in the following way:

\begin{lstlisting}
def IsSegment [LE α] (S C : Set α) : Prop := S ⊆ C ∧ ∀ c ∈ C, ∀ s ∈ S, c ≤ s → c ∈ S

def IsPropSegment [LE α] (S C : Set α) : Prop := IsSegment S C ∧ S ≠ C
\end{lstlisting}
The arguments $S$ and $C$ appear declared as belonging to the powerset of
$\alpha$, which in Lean is written as \lstinline{Set}~$\alpha$. The declaration in
square brackets is an \emph{implicit} argument stating that $\alpha$ belongs to
the class of types having the $\leq$ notation defined (which is the bare minimum
to be able to interpret the right hand side). After a few more lines, the
declaration
\lstinline{variable [PartialOrder α]} states that we will be assuming a partial
order structure on $\alpha$.

We can actually set up infix notation for \lstinline{IsSegment} and
\lstinline{IsPropSegment} in order to be able to write expressions as $S\segm C$
and $S \propsegm C$, as shown below.

After a new concept is introduced, a customary requisite is to write some
extremely basic lemmas which allow one to work with it. These are referred to
by the name \emph{API}, an acronym for  “application
programming interface”, a concept that comes from Computer Science. In our formalization,
part of the API comprises all the possible transitivity lemmas involving
$\segm$, $\propsegm$, or both. 

We describe the formalization of the fact, used at the beginning of the proof of
Lemma~\ref{lem:comparability}, that the union of a family of segments is a segment. The
formalized statement is the following (where $\union_0$ denotes the operator of
union of a family), and the \lstinline{by}
keyword signals the start of the (tactic) proof:
\begin{lstlisting}
lemma sUnion_of_IsSegment {F : Set (Set α)} (hF : ∀M ∈ F, M ⊑ C) : ⋃₀ F ⊑ C := by
\end{lstlisting}
Since $\union_0 F \segm C$ is defined by a conjunction, its justification is
\emph{constructed} by providing proofs for each conjunct. Each of those proofs
appear indented and signaled by “$\cdot$” below. We will analyze the first sub-proof line by line. 
\begin{lstlisting}
  constructor
  · intro s sInUnionF
\end{lstlisting}
\newpage
\begin{lstlisting}
    obtain ⟨M, MinF, sinM⟩ := sInUnionF
    exact (hF M MinF).1 sinM
  · intro c cinC s sInUnionF cles
    obtain ⟨M, MinF, sinM⟩ := sInUnionF
    exact ⟨M, MinF, (hF M MinF).2 c cinC s sinM cles⟩
\end{lstlisting}
Right after writing
\lstinline{constructor} and the subsequent dot, the VS Code editor echoes:
\begin{lstlisting}
  α : Type u_1
  inst ✝ : PartialOrder α
  C : Set α
  F : Set (Set α)
  hF : ∀ M ∈ F, M ⊑ C
  ⊢ ⋃₀ F ⊆ C
\end{lstlisting}
\lstset{basicstyle=\ttfamily}
This “InfoView” lists all terms available to work (hypotheses are also included as
“\lstinline{Prop}\hspace{0.04em}ositional” terms), and the current \emph{goal}
(which, for this sub-proof, is the inclusion on the last line).
\lstset{basicstyle=\ttfamily\small}

The natural way of producing a proof of that inclusion (defined by
\lstinline{∀ ⦃s⦄, s ∈ ⋃₀ F → s ∈ C}), is to \emph{introduce} two new variables
named \lstinline{s} and \lstinline{sInUnionF},
\begin{lstlisting}
  · intro s sInUnionF
\end{lstlisting}
whose types (“\lstinline{α}” and
“\lstinline{s ∈ ⋃₀ F}”, respectively) are deduced from the previous goal. Now
the InfoView turns into
\begin{lstlisting}
  α : Type u_1
  inst ✝ : PartialOrder α
  C : Set α
  F : Set (Set α)
  hF : ∀ M ∈ F, M ⊑ C
  s : α
  sInUnionF : s ∈ ⋃₀ F
  ⊢ s ∈ C
\end{lstlisting}
From \lstinline{sInUnionF}, which states by definition that \lstinline{s} belongs to some
element of \lstinline{F}, we obtain such an element \lstinline{M} and further
terms/hypothesis that state the relations among them with the next line (where
the “\lstinline{:=}” can be read as “from”):
\begin{lstlisting}
   obtain ⟨M, MinF, sinM⟩ := sInUnionF
\end{lstlisting}
After this tactic, the propositional variables \lstinline{MinF} and \lstinline{sinM} state that
\lstinline{M ∈ F} and  \lstinline{s ∈ M}, respectively.
Finally, we combine all the elements available by using some of the benefits
of the type-theoretic framework:
\begin{itemize}
\item
  Logical constructs like implications and universal quantifiers behave as
  functions. For instance, the hypothesis \lstinline{hF} (of type
  \lstinline{∀ M, M ∈ F → M ⊑ C}) can be fed with the term \lstinline{M} to
  obtain the implication  \lstinline{hf M} (having type \lstinline{M ∈ F → M ⊑ C}) and 
  the latter can be applied to \lstinline{MinF}
  (a term for the antecedent) to obtain a term \lstinline{hf M MinF} for the consequent \lstinline{M ⊑ C}.
\item
  The conjunction behaves as a Cartesian
  product, where components correspond to each conjunct. Hence the first
  component
  \lstinline{(hf M MinF).1} is a term justifying \lstinline{M ⊆ C} = \lstinline{∀ ⦃s⦄, s ∈ M → s ∈ C}.
\end{itemize}
\lstset{basicstyle=\ttfamily}
By applying the last term obtained to \lstinline{sinM}, we obtain \lstinline{exact}ly
what we were looking for, and the sub-proof ends.
\lstset{basicstyle=\ttfamily\small}
\begin{lstlisting}
    exact (hF M MinF).1 sinM
\end{lstlisting}

For the definition of goodness, we declare our own class consisting of types
supporting a partial order, and we add an otherwise unspecified $g$. A special
type of comment (a \emph{docstring}) describes the concept introduced:
\begin{lstlisting}
/--
A partial order with an *expander* function from subsets to subsets. In main applications, the
expander actually returns a bigger subset.
-/
class OrderExpander (α : Type*) [PartialOrder α] where
  g : Set α → Set α
\end{lstlisting}
Assuming the appropriate structures on $\alpha$ we are finally able to write
down the definition.
\begin{lstlisting}
variable [PartialOrder α] [OrderExpander α]
  
def Good (C : Set α) := IsChain ( · ≤ ·) C ∧ ∀ {S}, S ⊏ C → S ⊏ g S ∧ g S ⊑ C
\end{lstlisting}

A further class concerns partial orders with an “$f$”,
\begin{lstlisting}
class OrderSelector (α : Type*) [PartialOrder α] where
  f : Set α → α
\end{lstlisting}
and a statement that each
type supporting it is an “instance” of \lstinline{OrderExpander} in a canonical way. This is
justified  by presenting 
$C \stackrel{g}{\longmapsto} C \cup \{ f(C) \}$ as the witness.
\begin{lstlisting}
instance [PartialOrder α] [OrderSelector α] : OrderExpander α := ⟨fun C => C ∪ {OrderSelector.f C}⟩
\end{lstlisting}

We skip directly to the statement of the Unbounded Chain Lemma,
\begin{lstlisting}
lemma unbounded_chain [PartialOrder α] [Inhabited α] :
     ∃ C, IsChain ( · ≤ ·) C ∧ ¬ ∃ sb : α, ∀ a ∈ C, a < sb
\end{lstlisting}
which moreover assumes $\alpha$ to be nonempty
(more precisely, that has a designated element) for simplicity,
and the proof of Zorn's Lemma using it:
\begin{lstlisting}
lemma zorn [PartialOrder α] [Inhabited α]
    (ind : ∀ (C : Set α), IsChain ( · ≤ ·) C → ∃ ub, ∀ a ∈ C, a ≤ ub) : ∃ (x : α), IsMaximal x := by
  obtain ⟨C, chain, subd⟩ := unbounded_chain (α := α)
  push_neg at subd
  obtain ⟨ub, hub⟩ := ind C chain
  existsi ub
  intro z hz
  by_contra zneub
  obtain ⟨a, ainC, anltz⟩ := subd z
  exact anltz $ lt_of_le_of_lt (hub a ainC) $ lt_of_le_of_ne' hz zneub
\end{lstlisting}
We comment briefly on some of the tactics employed in this elementary proof. As
before, \lstinline{obtain} decomposes the statement of
\lstinline{unbounded_chain}, and in particular \lstinline{subd} is a term
asserting the truth of $\neg$~\lstinline{∃ sb : α, ∀ a ∈ C, a < sb}. The tactic
\lstinline{push_neg} applies the De Morgan rules transforming it into
\lstinline{∀ (sb : α), ∃ a ∈ C, ¬a < sb}. The obtained upper bound
\lstinline{ub} for the (strictly) unbounded chain is presented as a witness to the existential quantifier of
the conclusion by using the 
\lstinline{existsi} tactic. After introducing variables \lstinline{h} and \lstinline{hz},
the \lstinline{by_contra} tactic starts a proof by contradiction where the new
hypothesis (the negation of the goal appearing immediately before) is stored in
the variable \lstinline{zneub}.

We would like to add a final word on the description of this proof of Zorn's
Lemma as the “leanest” one: Mathlib's version of its formalization \cite{zorn.lean} was
\emph{ported} (translated) from Isabelle/HOL. This new proof was originally
conceived for and written in Lean.

\paragraph{Acknowledgment} We are very grateful to both reviewers and to the
Editorial Board for their comments that helped to enhance our
presentation. Particularly, we thank Reviewer 3 for an extremely dedicated
reading, and Reviewer 2 for insightful comments on the intended audience of the
paper.

\providecommand{\noopsort}[1]{}
\begin{small}\end{small}

\end{document}